\newcommand\scalemath[2]{\scalebox{#1}{\mbox{\ensuremath{\displaystyle #2}}}}
\newcommand{\Q}{{\mathbb Q}}
\newcommand{\C}{{\mathbb C}}
\newcommand{\Z}{{\mathbb Z}}
\DeclareRobustCommand{\stirling}{\genfrac\{\}{0pt}{}}
\theoremstyle{plain}
\numberwithin{equation}{section}
\newtheorem{thm}{Theorem}[section]
\newtheorem{theorem}[thm]{Theorem}
\newtheorem{lemma}[thm]{Lemma}
\newtheorem{definition}[thm]{Definition}
\newtheorem{corollary}[thm]{Corollary}
\newtheorem*{rem}{Remark}
\begin{document}
\setcounter{page}{1}

\title{Multivariate Fibonacci-Like Polynomials and Their Applications}

\author{Sejin Park}
\address{Department of Mathematics\\
                Brown University\\
                Providence, RI, 02912}
\email{sejin\_park@brown.edu}
\author{Etienne Phillips}
\address{Department of Mathematics\\
                North Carolina State University\\
                Raleigh, NC, 27695}
\email{ecphill6@ncsu.edu}
\author{Peikai Qi}
\address{Department of Mathematics\\
               Michigan State University\\
               East Lansing, MI, 48824}
\email{qipeikai@msu.edu}
\author{Zhan Zhan}
\address{Department of Mathematics\\
               Michigan State University\\
               East Lansing, MI, 48824}
\email{zhanzha2@msu.edu}
\author{Ilir Ziba}
\address{Department of Mathematics\\
                University of Michigan\\
                Ann Arbor, MI, 48109}
\email{zibai@umich.edu}

\thanks{The authors would like to thank Aklilu Zeleke for his suggestions and mentorship. This work is partially supported by grants from the National Science Foundation (NSF Award No. 1852066) and Michigan State University's SURIEM REU}

\begin{abstract}
The Fibonacci polynomials are defined recursively as $f_{n}(x)=xf_{n-1}(x)+f_{n-2}(x)$, where $f_0(x) = 0$ and $f_1(x)= 1$. We generalize these polynomials to an arbitrary number of variables with the $r$-Fibonacci polynomial.
We extend several well-known results such as the explicit Binet formula and a Cassini-like identity, and use these to prove that the $r$-Fibonacci polynomials are irreducible over $\mathbb{C}$ for $n \geq r \geq 3$. Additionally, we derive an explicit sum formula and a generalized generating function. Using these results, we establish connections to ordinary Bell polynomials, exponential Bell polynomials, Fubini numbers, and integer and set partitions. 
\end{abstract}

\maketitle

\section{Introduction} \label{form}
The standard Fibonacci polynomials, which have been the subject of extensive study, are defined by the recursion $$f_n(x)=xf_{n-1}+f_{n-2}$$
where $f_0(x)=0$ and $f_1(x)=1$. This family of polynomials has also been generalized to a variation in two variables: 
$$F_n(x,y) = xF_{n-1}(x,y) + yF_{n-2}(x,y)$$

In \cite{ArsComb}, some explicit forms are presented are used to derive some combinatorial applications. Moreover, the irreducibility of these polynomials is studied in \cite{Divisibility}. In this paper, we generalize some of the results for the two-variate Fibonacci polynomials to $r$ variables and present some combinatorial applications.

\begin{definition}
The \textbf{\textit{r}-Fibonacci Polynomial} is defined as
$$F_n^{[r]}(x_1,x_2,...x_r)=\begin{cases} 0 &0\leq n<r-1 \\ 1 & n=r-1 \\ \sum_{i=1}^rx_iF^{[r]}_{n-i} & n \geq r \end{cases} $$
\end{definition}

Notice that specific cases of the $r$-Fibonacci polynomial generate well-known sequences of numbers and polynomials. For example, $F_n^{[2]}(x,1)$ are the aforementioned standard Fibonacci polynomials, and \newline $F_n^{[r]}(1,1,\dots, 1)$ gives the famous Fibonacci, Tribonacci and Tetranacci number sequences when $r= 2, 3$ and $4$ respectively.

We also generalize this family of polynomials further to include a family with unfixed initial conditions.

\begin{definition}
The \textbf{Generic \textit{r}-Fibonacci Polynomial} is defined as

$$ \mathcal{F}^{[r]}_n=
\begin{cases}
\ell_n(x_1,...x_r) & 0\leq n\leq r-1 \\ \sum_{i=1}^rx_i\mathcal{F}^{[r]}_{n-i} & n\geq r \end{cases} $$

Where $\ell_i\in\Q[x]$ is an arbitrary polynomial.

\end{definition}

In Section \ref{sec:binet}, we generalize Binet's formula and Cassini's identities to the $r$-Fibonacci polynomials and use the former to give a complete classification of the irreducibility of all $r$-Fibonacci polynomials. In Section \ref{sec:part}, we draw a connection between the $r$-Fibonacci polynomials and integer partitions and use this to derive an explicit combinatorial formula. Then, we study the generating functions of $r$-Fibonacci polynomials in Section \ref{sec:gen} and derive several identities relating to combinatorial sequences, such as the Fubini numbers, Pell numbers, and Stirling numbers of the second kind. Finally, using the combinatorial sum formula and generating functions, we prove a relationship to Bell polynomials in Section \ref{sec:bell}.

\section{Explicit Forms and Identities} \label{sec:binet}

The standard Fibonacci polynomials have a well-known closed form known as a Binet form (e.g., see \cite{Roots}). This formula is given by

\begin{equation}
    f_n(x)=F_n^{[2]}(x,1)=\frac{\alpha^{n}-\beta^n}{\alpha-\beta}
\end{equation}
with 
$$\alpha=\frac{x+\sqrt{x^2+4}}{2} \ \ \ \ \ \ \beta=\frac{x-\sqrt{x^2+4}}{2} $$

In \cite{ArsComb}, this formula is generalized to the two-variate case. In this section we generalize the Binet formula to the r-Fibonacci polynomials.

\subsection{The Generalized Binet Form} \

In \cite{SpectralTheory}, a method of deriving the Binet Form of the Fibonacci numbers via spectral-theoretic methods is presented. This method can be generalized to the generic $r$-Fibonacci polynomials.

Observe that for $n\geq r-1$, the following matrix product identity holds:
\begin{equation}\begin{pmatrix}
    x_1 & x_2 & \dots & x_r \\
    1 & & 0 & 0 \\
    & \ddots & & \vdots \\
    0 & & 1 & 0
\end{pmatrix} 
\begin{pmatrix} 
\mathcal{F}_n^{[r]}\\
\mathcal{F}_{n-1}^{[r]}\\
\vdots \\
\mathcal{F}_{n-r+1}^{[r]}\\
\end{pmatrix} 
=
\begin{pmatrix} 
\mathcal{F}_{n+1}^{[r]}\\
\mathcal{F}_{n}^{[r]}\\
\vdots \\
\mathcal{F}_{n-r+2}^{[r]}\\
\end{pmatrix} 
\end{equation}

Equation (2.2) gives the following lemma.

\begin{lemma}
Let $M=\scalemath{0.7}{\begin{pmatrix}
    x_1 & x_2 & \dots & x_r \\
    1 & & 0 & 0 \\
    & \ddots & & \vdots \\
    0 & & 1 & 0
\end{pmatrix}}$.
Then \begin{equation}M^{n-r+1} 
\begin{pmatrix} 
\mathcal{\ell}_{r-1}^{[r]}\\
\mathcal{\ell}_{r-2}^{[r]}\\
\vdots \\
\mathcal{\ell}_{0}^{[r]}\\
\end{pmatrix} 
=
\begin{pmatrix} 
\mathcal{F}_{n}^{[r]}\\
\mathcal{F}_{n-1}^{[r]}\\
\vdots \\
\mathcal{F}_{n-r+1}^{[r]}\\
\end{pmatrix}  \end{equation}
\end{lemma}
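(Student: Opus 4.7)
The plan is to prove the identity by induction on $n \ge r-1$, using equation (2.2) as the engine for the inductive step and the definition of the generic $r$-Fibonacci polynomial for the base case.

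For the base case $n = r-1$, the exponent $n-r+1$ equals zero, so $M^{n-r+1} = I$. Then the left-hand side is exactly the column vector $(\ell_{r-1}, \ell_{r-2}, \ldots, \ell_0)^T$, which matches the right-hand side $(\mathcal{F}_{r-1}^{[r]}, \mathcal{F}_{r-2}^{[r]}, \ldots, \mathcal{F}_0^{[r]})^T$ by the initial-condition clause of the definition of $\mathcal{F}_n^{[r]}$.

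For the inductive step, suppose the identity holds for some $n \ge r-1$. Then
\begin{equation*}
M^{(n+1)-r+1}\begin{pmatrix}\ell_{r-1}\\ \vdots\\ \ell_0\end{pmatrix}
= M\cdot\left(M^{n-r+1}\begin{pmatrix}\ell_{r-1}\\ \vdots\\ \ell_0\end{pmatrix}\right)
= M\begin{pmatrix}\mathcal{F}_n^{[r]}\\ \vdots\\ \mathcal{F}_{n-r+1}^{[r]}\end{pmatrix},
\end{equation*}
and applying equation (2.2) to the rightmost vector (which is valid since $n \ge r-1$) yields $(\mathcal{F}_{n+1}^{[r]}, \mathcal{F}_n^{[r]}, \ldots, \mathcal{F}_{n-r+2}^{[r]})^T$, as desired.

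There is really no hard step: the only thing to check carefully is that equation (2.2) is itself correct, i.e.\ that the top entry of $Mv$ reproduces the recurrence $\sum_{i=1}^r x_i \mathcal{F}_{n+1-i}^{[r]} = \mathcal{F}_{n+1}^{[r]}$ (which holds for $n+1 \ge r$, equivalently $n \ge r-1$), and that the lower rows simply shift the vector down by one index. Both are immediate from the shape of $M$. The only mild subtlety worth flagging is the range of validity: equation (2.2) requires $n \ge r-1$ so that the recurrence is applicable, which is precisely the range for which our lemma is stated.
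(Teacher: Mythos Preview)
Your proof is correct and is exactly the argument the paper intends: the paper's own proof simply reads ``Follows by induction on $n$,'' and you have spelled out precisely that induction, with base case $n=r-1$ (where $M^{0}=I$) and inductive step given by one application of equation~(2.2). There is nothing to add.
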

\begin{proof}
Follows by induction on $n$.
\end{proof} 

The next lemma shows how  $M$ can be diagonalized.

\begin{lemma} Suppose $x_1,... x_r\in \mathbb{C}$ such that $M$ has $r$ distinct eigenvalues, and let $\{\lambda_k\}_{1\leq k \leq r}$ be the eigenvalues of $M$ and $D$ be the diagonal matrix with entries $\lambda_k$. Then $M=SDS^{-1}$, where $S_{i,j}=\lambda_j^{i-1}$ and $S^{-1}_{i,j}=\sigma_{i,j}$ is given by:

$$\sigma_{i,j}=\begin{cases} (-1)^{r-j}\bigg(\frac{\operatorname*{\sum} \limits_{{\substack{1\leq m_1 < m_2<\cdots<m_{j-1}\leq r \\ m_1,\cdots,m_{j-1}\neq i}}} \lambda_{m_1}\lambda_{m_2}\cdots \lambda_{m_{j-1}}}
{\operatorname*{\prod} \limits_{{\substack{1\leq m \leq r \\ m\neq i}}}(\lambda_{m}-\lambda_{i})} \bigg) & 1<j\leq r \\ 
\bigg( \operatorname*{\prod} \limits_{{\substack{1\leq m \leq r \\ m\neq i}}}\lambda_{i}-\lambda_{m} \bigg)^{-1}& j=1 \end{cases} $$
\end{lemma}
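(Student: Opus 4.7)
The plan is a three-stage argument. First, I would compute the characteristic polynomial of $M$ by expanding $\det(\lambda I - M)$ along the last column (or by induction on $r$), showing it equals $\lambda^r - x_1 \lambda^{r-1} - \cdots - x_r$. Having $r$ distinct eigenvalues then ensures $M$ is diagonalizable over $\C$. Second, I would solve $(\lambda_k I - M)v = 0$: the subdiagonal of $1$'s in $M$ forces the entries of any eigenvector to form a geometric progression in $\lambda_k$, so the columns of $S$ are exactly Vandermonde vectors in the eigenvalues, and $MS = SD$ holds column by column.

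The main work is computing $S^{-1}$ explicitly, which I would do via Lagrange interpolation. The $i$-th Lagrange basis polynomial
\[
L_i(x) \;=\; \frac{\prod_{m \neq i}(x - \lambda_m)}{\prod_{m \neq i}(\lambda_i - \lambda_m)}
\]
satisfies $L_i(\lambda_k) = \delta_{ik}$, so its coefficients in the monomial basis form a row of the Vandermonde inverse. Expanding the numerator as
\[
\prod_{m \neq i}(x - \lambda_m) \;=\; \sum_{s=0}^{r-1} (-1)^{r-1-s}\, e_{r-1-s}\bigl(\{\lambda_m : m \neq i\}\bigr)\, x^s,
\]
where $e_t$ denotes the $t$-th elementary symmetric polynomial, and dividing by the denominator realizes each $\sigma_{i,j}$ as a signed ratio of an elementary symmetric polynomial in $\{\lambda_m : m \neq i\}$ over the pairwise-difference product. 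The case $j = 1$ collapses to the constant term, where $e_0 = 1$ leaves only the denominator and recovers the simpler closed form stated in the lemma.

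The main obstacle will be managing the signs and reconciling the two denominator conventions appearing in the statement: $\prod_{m\neq i}(\lambda_i - \lambda_m)$ and $\prod_{m\neq i}(\lambda_m - \lambda_i)$ differ by $(-1)^{r-1}$, and this factor must be tracked alongside the $(-1)^{r-1-s}$ coming from the elementary-symmetric-polynomial expansion. It is probably cleanest to first derive a single uniform formula for $\sigma_{i,j}$ valid for all $j$ and only split off $j = 1$ at the end. As an independent verification, one may check $S\sigma = I$ directly by invoking the classical identity $\sum_k \lambda_k^{s}/\prod_{m\neq k}(\lambda_k - \lambda_m) = \delta_{s,\, r-1}$ for $0 \leq s \leq r-1$, which itself comes from interpolating the monomial $x^s$ at the points $\lambda_k$.
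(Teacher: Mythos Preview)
Your proposal is correct and follows the same structure as the paper's proof: verify that the Vandermonde columns are eigenvectors of $M$ via the characteristic equation, then identify $S^{-1}$ as the inverse of a Vandermonde matrix. The only difference is that the paper simply cites Knuth for the explicit inverse entries, whereas you actually derive them via Lagrange interpolation---this is the standard argument behind the cited reference, so your version is just a more self-contained rendering of the same proof.
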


\begin{proof}
Because $\lambda_k$ are solutions to the characteristic equation $\lambda_k^r-x_1\lambda_k^{r-1}-...-x_r=0$,

$${\begin{pmatrix}
    x_1 & x_2 & \dots & x_r \\
    1 & & 0 & 0 \\
    & \ddots & & \vdots \\
    0 & & 1 & 0
\end{pmatrix}} \begin{pmatrix}\lambda_k^{r-1}\\\lambda_k^{r-2}\\\vdots\\1 \end{pmatrix}= \begin{pmatrix}\sum_{i=1}^rx_i\lambda_k^{r-i}\\\lambda_k^{r-1}\\\vdots\\\lambda_k \end{pmatrix} = \begin{pmatrix}\lambda_k^r\\\lambda_k^{r-1}\\\vdots\\\lambda_k \end{pmatrix}.$$
The columns of the matrix $S$ are eigenvectors of $M$. Because $S$ is the Vandermonde matrix, the entries of its inverse are already well known to be given by $\sigma_{i,j}$ (e.g., exercise 40 in \cite{knuth_1997}).
\end{proof} 

Using the above lemma we derive an explicit form for the generic $r$-Fibonacci polynomials.

\begin{theorem} The explicit form of the Generic $r$-Fibonacci Polynomials is given by
$$\mathcal{F}_n^{[r]}=\sum_{i=1}^{r}\bigg(\lambda_i^{n}\sum_{j=1}^r(\sigma_{i,j}\ell_{r-j})\bigg) $$
\end{theorem}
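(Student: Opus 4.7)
The plan is to combine Lemma 2.1 with the diagonalization in Lemma 2.2. By Lemma 2.1, $\mathcal{F}_n^{[r]}$ is the first coordinate of the vector $M^{n-r+1}(\ell_{r-1},\ell_{r-2},\ldots,\ell_0)^T$, so I would reduce the problem to computing the first row of $M^{n-r+1}$ and then taking its inner product with the initial-condition column.

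First, I would invoke the factorization $M = SDS^{-1}$ from Lemma 2.2, which gives $M^{n-r+1}=SD^{n-r+1}S^{-1}$. The computation inside the proof of Lemma 2.2 shows that the $k$-th column of $S$ is the eigenvector $(\lambda_k^{r-1},\lambda_k^{r-2},\ldots,\lambda_k,1)^T$, so the first row of $S$ is $(\lambda_1^{r-1},\lambda_2^{r-1},\ldots,\lambda_r^{r-1})$. Right-multiplying this row by the diagonal matrix $D^{n-r+1}$ converts it into $(\lambda_1^n,\lambda_2^n,\ldots,\lambda_r^n)$, since $\lambda_k^{r-1}\cdot\lambda_k^{n-r+1}=\lambda_k^n$. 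Right-multiplying further by $S^{-1}$ then produces a row whose $j$-th entry is $\sum_{i=1}^r\lambda_i^n\sigma_{i,j}$.

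Finally, taking the inner product of this row with $(\ell_{r-1},\ldots,\ell_0)^T$ yields
\[
\mathcal{F}_n^{[r]}=\sum_{j=1}^r\ell_{r-j}\sum_{i=1}^r\lambda_i^n\sigma_{i,j}=\sum_{i=1}^r\lambda_i^n\sum_{j=1}^r\sigma_{i,j}\ell_{r-j}
\]
after interchanging the order of summation, which is exactly the claimed formula. The only delicate step is the exponent bookkeeping: one has to notice that the first row of $S$ consists of the entries $\lambda_k^{r-1}$ rather than $\lambda_k^0$, so that the power of $D^{n-r+1}$ combines with it cleanly into $\lambda_k^n$ instead of $\lambda_k^{n-r+1}$. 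Beyond this, the argument is a direct assembly of the two preceding lemmas and presents no genuine obstacle.
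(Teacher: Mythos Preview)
Your proposal is correct and follows essentially the same route as the paper: both proofs combine Lemma~2.1 with the diagonalization $M=SDS^{-1}$ of Lemma~2.2, carry out the product $SD^{n-r+1}S^{-1}$ against the initial-condition vector, and read off the first entry. The only cosmetic difference is that you extract the first row of $M^{n-r+1}$ before pairing with $(\ell_{r-1},\ldots,\ell_0)^T$, whereas the paper multiplies the four factors in a different grouping; the exponent bookkeeping you flag is exactly the point the paper's computation hinges on as well.
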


\begin{proof}
By \textbf{Lemmas 2.1} and \textbf{2.2}, we obtain the following vector-matrix product from equation (2.3):

\tiny{$$ \hspace{-10mm} \begin{pmatrix}
    \lambda_1^{r-1} & \lambda_2^{r-1} & \cdots & \lambda_r^{r-1} \\
    \lambda_1^{r-2} & \lambda_2^{r-2} & \cdots & \lambda_r^{r-2}\\
    \vdots & \vdots & \cdots & \vdots \\
    \lambda_1 & \lambda_2 & \cdots & \lambda_r \\
    1 & 1 & \cdots & 1
\end{pmatrix}
\begin{pmatrix}
    \lambda_1^{n-r+1} & & &0 \\
    & \lambda_2^{n-r+1} & & &\\
    & & \ddots & & \\
    0& & & \lambda_r^{n-r+1}
\end{pmatrix}
\begin{pmatrix}
    \sigma_{1,1} & \cdots & \sigma_{1,r}\\
    \vdots &  & \vdots \\
    \sigma_{r,1} & \cdots & \sigma_{r,r}
\end{pmatrix}
\begin{pmatrix} 
    \ell_{r-1} \\
    \vdots \\
    \ell_{0} \\
\end{pmatrix}$$
$$
=\begin{pmatrix} 
\mathcal{F}_{n}^{[r]}\\
\mathcal{F}_{n-1}^{[r]}\\
\vdots \\
\mathcal{F}_{n-r+1}^{[r]}\\
\end{pmatrix}
$$}

\normalsize{Multiplying the matrices together gives}

\small{$$\begin{pmatrix}
    \lambda_1^n & \lambda_2^n & \cdots & \lambda_r^n \\
    \vdots & \vdots & \cdots & \vdots
\end{pmatrix}
\begin{pmatrix}
    \sum_{j=1}^r\sigma_{1,j}\ell_{r-j}\\
    \sum_{j=1}^r\sigma_{2,j}\ell_{r-j}\\
    \vdots\\
    \sum_{j=1}^r\sigma_{r,j}\ell_{r-j}
\end{pmatrix}
=
\begin{pmatrix}
    \displaystyle \sum_{i=1}^r\bigg(\lambda_i^{n}\sum_{j=1}^r\sigma_{i,j}\ell_{r-j}\bigg)\\
    \vdots
\end{pmatrix}
=\begin{pmatrix} 
\mathcal{F}_{n}^{[r]}\\
\vdots
\end{pmatrix}
$$}

Comparing the entries in the above matrices gives the desired equality.

\end{proof}

Note that known Binet-like formula emerges as special cases of \textbf{Theorem 2.3}, so it make sense to refer to the previous theorem as a generalized Binet form. We obtain the following result about the $r$-Fibonacci polynomials

\begin{corollary} The Binet form of the $r$-Fibonacci polynomial is given by 
$$ F_n^{[r]}=\sum_{i=1}^r \frac{\lambda_i^n}{\displaystyle \prod_{1\leq m \leq r \atop m\neq i}(\lambda_i-\lambda_m)}=\sum_{m_1+m_2+...+m_r=n-r+1}\lambda_1^{m_1}\lambda_2^{m_2}...\lambda_r^{m_r}$$
\end{corollary}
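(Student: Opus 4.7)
The plan is two-step: first derive the middle expression as a direct specialization of Theorem 2.3, and then establish the symmetric-function identity that gives the final equality.

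For the Binet form, observe that the $r$-Fibonacci polynomial is the generic $r$-Fibonacci polynomial with initial values $\ell_{r-1}=1$ and $\ell_k=0$ for $0\leq k\leq r-2$. Substituting this into Theorem 2.3, the inner sum $\sum_{j=1}^r\sigma_{i,j}\ell_{r-j}$ collapses to $\sigma_{i,1}$, which by Lemma 2.2 equals $\bigl(\prod_{m\neq i}(\lambda_i-\lambda_m)\bigr)^{-1}$. This produces exactly
\[
F_n^{[r]}=\sum_{i=1}^{r}\frac{\lambda_i^n}{\prod_{1\leq m\leq r,\,m\neq i}(\lambda_i-\lambda_m)}.
\]

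For the second equality, I would recognize the claimed sum as the complete homogeneous symmetric polynomial $h_{n-r+1}(\lambda_1,\ldots,\lambda_r)$, and identify it with $F_n^{[r]}$ by showing it satisfies the same recurrence and initial conditions. From the characteristic polynomial $\lambda^r-x_1\lambda^{r-1}-\cdots-x_r$ and Vieta's formulas, $x_i=(-1)^{i+1}e_i(\lambda_1,\ldots,\lambda_r)$. Newton's identity (equivalently, the generating-function relation $\bigl(\sum_{k\geq 0} h_k t^k\bigr)\prod_{i=1}^r(1-\lambda_i t)=1$) gives, for $k\geq 1$,
\[
h_k=\sum_{i=1}^r(-1)^{i+1}e_ih_{k-i}=\sum_{i=1}^r x_ih_{k-i}.
\]
Setting $G_n:=h_{n-r+1}(\lambda_1,\ldots,\lambda_r)$ with $h_m=0$ for $m<0$, this becomes $G_n=\sum_{i=1}^r x_iG_{n-i}$, matching the defining recurrence of $F_n^{[r]}$. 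The initial conditions $G_{r-1}=h_0=1$ and $G_n=0$ for $0\leq n<r-1$ match those of the $r$-Fibonacci polynomial, so $F_n^{[r]}=G_n$ for all $n\geq 0$.

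The main obstacle is a clean bookkeeping point: getting the sign conventions right when passing from the characteristic polynomial (whose coefficients are $-x_i$) to the elementary symmetric polynomials $e_i$ (which appear with alternating signs by the usual convention). An alternative route would be to prove the equality of the two expressions directly via partial fractions of $\prod_{i=1}^r(1-\lambda_i t)^{-1}=\sum_{i=1}^r A_i/(1-\lambda_i t)$, where evaluating at $t=1/\lambda_i$ yields $A_i=\lambda_i^{r-1}/\prod_{m\neq i}(\lambda_i-\lambda_m)$; expanding each summand as a geometric series and comparing the coefficient of $t^{n-r+1}$ on both sides gives the identity without invoking Newton's formulas, but the calculation is of comparable length.
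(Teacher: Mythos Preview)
Your argument is correct. The first equality is derived exactly as in the paper, by specializing Theorem~2.3 to the initial data $\ell_{r-1}=1$, $\ell_k=0$ otherwise, so that only $\sigma_{i,1}$ survives. For the second equality the paper merely asserts it ``can be verified by straightforward algebraic manipulation'' without giving a method; your two routes (identifying the sum as $h_{n-r+1}$ and checking the recurrence via Newton's identities, or the partial-fraction expansion of $\prod_i(1-\lambda_i t)^{-1}$) are both valid realizations of that manipulation, and in fact supply the detail the paper omits.
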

\begin{proof}
The first equality is given by applying \textbf{Theorem 2.4}, and the second can be verified by straightforward algebraic manipulation.
\end{proof}

For $r=2$, we obtain the equation (2.1).

\begin{rem}
The generalized Binet form cannot be solved explicitly in terms of radicals for $r\geq 5$ because the characteristic equation of $M$ is an arbitrary monic polynomial of degree $r$.
\end{rem}
\subsection{Generalization of Cassini's Identity}\hspace{1mm}

In the case for $r=2$, $x_2=1$, then Cassini's identity states 

$$f_{n-1}f_{n+1}-(f_{n})^2=(-1)^n$$

Using a similar strategy as in the above results, we find a generalization of Cassini's identity. This can be derived by taking the determinants on both sides of the matrix identity for the standard Fibonacci polynomials, which is given by
$$\begin{pmatrix} x & 1 \\ 1 & 0 \end{pmatrix}^{n-2}=\begin{pmatrix}
    f_{n+1} & f_{n} \\ f_{n} & f_{n-1}
\end{pmatrix}$$
We generalize this identity as follows.

\begin{theorem} For $n\geq 2r-2$

$$\det \begin{pmatrix} 
    F_{n-r+1}^{[r]} & \dots & F_{n}^{[r]} \\
    \vdots & \ddots & \vdots \\
    F_{n-2r+2}^{[r]} & \dots & F_{n-r+1}^{[r]}
\end{pmatrix}=(-1)^{n(r+1)}x_r^{n-2r+2} $$
\end{theorem}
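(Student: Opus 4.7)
The plan is to mimic the classical $r=2$ derivation, which proceeds by taking determinants of a matrix identity involving powers of the companion matrix $M$. I would begin by recognizing that the matrix in the statement, call it $B$, has $(i,j)$-entry $F^{[r]}_{(n-r+1)+(j-i)}$, so its $r$ columns are precisely the sliding window vectors $v_m = (F^{[r]}_m, F^{[r]}_{m-1}, \ldots, F^{[r]}_{m-r+1})^T$ for $m = n-r+1,\,n-r+2,\,\ldots,\,n$. By Lemma 2.1 applied to the ordinary $r$-Fibonacci polynomial (whose initial-value vector is $e_1 = (1,0,\ldots,0)^T$), one has $v_m = M^{m-r+1} e_1$ for every $m \ge r-1$.

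Pulling a common power of $M$ out of every column then yields the factorization
$$B \;=\; M^{n-2r+2}\, K, \qquad K \;:=\; \bigl[\,e_1,\; M e_1,\; M^{2} e_1,\; \ldots,\; M^{r-1} e_1\,\bigr],$$
which is meaningful exactly under the hypothesis $n \ge 2r-2$. Taking determinants reduces the problem to computing $\det(M)$ and $\det(K)$. A single cofactor expansion of $M$ along its last column (whose only nonzero entry is the $x_r$ in the top-right corner, with complementary submatrix lower-triangular with $1$'s on the diagonal) gives $\det(M) = (-1)^{r+1} x_r$. For $\det(K)$, the $k$-th column of $K$ is the window vector $v_{k+r-1}$; since the initial data satisfy $F^{[r]}_{r-1} = 1$ and $F^{[r]}_{i} = 0$ for $0 \le i \le r-2$, one checks entry-by-entry that $K$ is upper triangular with $1$'s on the diagonal, so $\det(K) = 1$.

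Combining these facts gives $\det(B) = \bigl((-1)^{r+1} x_r\bigr)^{n-2r+2}$, and the final step is a parity computation: modulo $2$ one has $(r+1)(n-2r+2) \equiv (r+1)n$ since $2(r+1)(r-1)$ is even, so the sign collapses to $(-1)^{n(r+1)}$ and matches the stated formula. None of the individual steps is deep; the main bookkeeping obstacle is keeping the indices and sign straight — specifically, verifying that the ``factor out $M^{n-2r+2}$'' step respects the column order, and that it is the initial conditions of $F^{[r]}$ (as opposed to a generic $\mathcal{F}^{[r]}$) which force $K$ to be unipotent rather than merely triangular. Once those two checks are in place, the rest is determinant arithmetic.
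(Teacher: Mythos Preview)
Your proposal is correct and follows essentially the same route as the paper: the paper also factors the matrix as $M^{n-2r+2}$ times the initial block $\bigl(F^{[r]}_{r-1+j-i}\bigr)_{1\le i,j\le r}$, which is exactly your $K=[e_1,Me_1,\ldots,M^{r-1}e_1]$, and then takes determinants. In fact you supply more than the paper does---the explicit computations $\det(M)=(-1)^{r+1}x_r$, $\det(K)=1$, and the parity reduction $(r+1)(n-2r+2)\equiv (r+1)n\pmod 2$ are all left implicit there---so your write-up is a fleshed-out version of the same argument.
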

\begin{proof}
Observe that by extending equation (2.3), we get the following matrix product identity:

$$M^{n-2r+2 }\begin{pmatrix}
      F_{r-1}^{[r]} & \dots & F_{2r-2}^{[r]} \\
    \vdots & \ddots & \vdots \\
    F_{0}^{[r]} & \dots & F_{r-1}^{[r]}
\end{pmatrix}=  \begin{pmatrix}  F_{n-r+1}^{[r]} & \dots & F_{n}^{[r]} \\
    \vdots & \ddots & \vdots \\
    F_{n-2r+2}^{[r]} & \dots & F_{n-r+1}^{[r]} \end{pmatrix}$$
The desired equality follows from taking the determinant of both sides.
\end{proof}

For example, in the case $r=3$ we have the following result for $n\geq4$

$$(F_{n-2}^{[3]})^2-F_{n-1}^{[3]}F^{[3]}_{n-2}F^{[3]}_{n-3}+F_n^{[3]}(F^{[3]}_{n-3})^2-F^{[3]}_nF^{[3]}_{n-2}F^{[3]}_{n-4}+F^{[3]}_{n-1}F^{[3]}_{n-4}=x_3^{n-4} $$

\subsection{Irreducibility}\hspace{1mm}

In \cite{Divisibility}, it is shown that, for the $r=2$ case, the $r$-Fibonacci polynomials are irreducible if and only if $n$ is prime. Utilizing the generalized Binet-like formula, we prove a stronger result for the $r$-Fibonacci polynomials when $r \geq 3$. First we prove the following lemma.

\begin{theorem}
$F_n^{[r]}(x_1,...x_r)$ is irreducible over $\C$ for all $n\geq r \geq 3$ 
\end{theorem}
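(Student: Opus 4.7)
The plan is to induct on $r \ge 3$, using the specialization $x_r = 0$ together with a weighted-homogeneity observation. Two preliminary ingredients are needed. First, a restriction lemma $F_n^{[r]}(x_1,\dots,x_{r-1},0) = F_{n-1}^{[r-1]}(x_1,\dots,x_{r-1})$, which follows by a routine induction on $n$: setting $x_r = 0$ in the recurrence for $F_n^{[r]}$ yields the recurrence for $F^{[r-1]}$ with a one-step shift of the lower index, and the initial data match. Second, assigning weight $i$ to $x_i$, the polynomial $F_n^{[r]}$ is weighted homogeneous of total weight $n - r + 1$, again by induction on $n$ from the recurrence, since each term $x_i F_{n-i}^{[r]}$ contributes weight $i + (n - i - r + 1) = n - r + 1$. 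Because $\mathbb{C}[x_1,\dots,x_r]$ is a graded UFD under this positive weighting, every factor of $F_n^{[r]}$ is itself weighted homogeneous.

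For the inductive step, assume $r \ge 4$ and that $F_m^{[r-1]}$ is irreducible for every $m \ge r - 1 \ge 3$. Suppose for contradiction $F_n^{[r]} = PQ$ with $P, Q$ non-constant; both are weighted homogeneous of positive weights summing to $n - r + 1$. Applying the restriction lemma gives $P|_{x_r = 0}\cdot Q|_{x_r = 0} = F_{n-1}^{[r-1]}$, which is nonzero (as $n - 1 \ge r - 1$) and irreducible by the inductive hypothesis. Hence one restriction, say $P|_{x_r = 0}$, is a nonzero element of $\mathbb{C}$. But setting $x_r = 0$ preserves the weight of every surviving monomial, so $P|_{x_r = 0}$ is weighted homogeneous of the same degree $d_P$ as $P$ (now in the subring with weights $1, \dots, r-1$); a nonzero constant has weight $0$, forcing $d_P = 0$, i.e., $P \in \mathbb{C}^{*}$, which contradicts the non-constancy of $P$.

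The main obstacle is the base case $r = 3$, where the clean argument fails because $F_{n-1}^{[2]}$ is generically reducible over $\mathbb{C}$. To handle it, one would combine weighted homogeneity with the explicit factorization of $F_{n-1}^{[2]}$ furnished by the Binet formula of Corollary 2.4: in $\mathbb{C}[x_1, x_2]$, the polynomial $F_{n-1}^{[2]}$ splits as (possibly) $x_1$ times explicit symmetric quadratics $x_1^2 + c_k x_2$. A hypothetical non-trivial divisor $P$ of $F_n^{[3]}$ would restrict at $x_3 = 0$ to a product of some subset of these factors with matching weighted degree. First, $x_1 \mid P|_{x_3 = 0}$ is ruled out: it would force $x_1 \mid F_n^{[3]}$, but a short generating-function calculation on the recursion with $x_1 = 0$ gives $F_n^{[3]}(0, x_2, x_3) = \sum_{2a + 3b = n - 2} \binom{a + b}{a} x_2^a x_3^b$, which is nonzero for every $n \ge 4$ (since every integer $\ge 2$ is representable as $2a + 3b$). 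For each remaining (quadratic-only) choice of $P|_{x_3 = 0}$, one writes the most general weighted-homogeneous lift $P = P|_{x_3 = 0} + x_3 R(x_1, x_2, x_3)$ and matches coefficients of $PQ$ against those of $F_n^{[3]}$ using the recurrence $F_n^{[3]} = x_1 F_{n-1}^{[3]} + x_2 F_{n-2}^{[3]} + x_3 F_{n-3}^{[3]}$, aiming to derive an inconsistent linear system for the unknown coefficients of $R$. Carrying out this coefficient match uniformly in $n$ is the technical heart of the base case; a conceptually cleaner alternative would be to prove the stronger statement that $h_{n-2}(\lambda_1, \lambda_2, \lambda_3)$ is already irreducible in the full polynomial ring $\mathbb{C}[\lambda_1, \lambda_2, \lambda_3]$, which would immediately imply irreducibility in the symmetric subring $\mathbb{C}[e_1, e_2, e_3] = \mathbb{C}[x_1, x_2, x_3]$.
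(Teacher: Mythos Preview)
Your reduction step from $r$ to $r-1$ is correct and is essentially the same argument the paper gives: the paper makes the weighted homogeneity visible by substituting $x_i\mapsto x_i^i$ so that $F_n^{[r]}(x_1,x_2^2,\dots,x_r^r)$ becomes ordinarily homogeneous of degree $n-r+1$, then sets $x_4=\cdots=x_r=0$ all at once to land in the $r=3$ case, while you step down one variable at a time. Either way, a nontrivial weighted-homogeneous factor cannot restrict to a nonzero constant, and the restriction is nonzero because the restricted polynomial is, so the reduction goes through.

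The genuine gap is exactly where you flagged it: the base case $r=3$. Your coefficient-matching sketch is not carried out, and one step in it is actually wrong as stated: from $x_1\mid P|_{x_3=0}$ you cannot conclude $x_1\mid F_n^{[3]}$; you only get $x_1\mid F_{n-1}^{[2]}$, which \emph{does} happen whenever $n$ is odd, so this does not rule out the $x_1$-factor. More importantly, the ``uniform in $n$'' linear-algebra program you describe for the remaining quadratic lifts is precisely the hard part, and you have not done it. The paper bypasses all of this by taking your ``cleaner alternative'': via the Binet form (Corollary~2.4) it rewrites $F_n^{[3]}$ as the complete homogeneous symmetric polynomial $h_{n-2}(\lambda_1,\lambda_2,\lambda_3)$ under the elementary-symmetric change of variables $x_1=e_1,\ x_2=-e_2,\ x_3=e_3$, and then \emph{cites} an outside source for the irreducibility of $h_m(\lambda_1,\lambda_2,\lambda_3)$ over $\mathbb{C}$. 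So the paper does not supply a self-contained proof of the $r=3$ case either; it outsources it. If you want a complete argument, you must either supply that symmetric-polynomial irreducibility proof or finish your coefficient-matching approach, being careful to repair the $x_1$-divisibility step.
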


\begin{proof}

First we show that $F_{n+r-3}^{[r]}$ is irreducible over $\C$ whenever $F_n^{[3]}$ is irreducible over $\C$.

Through induction on $n$, we can see the polynomial $F_n^{[r]}(x_1,x_2^2,x_3^3,...x_r^r)$ is a homogeneous polynomial of degree $n-r+1$. 

Suppose $F_{n+r-3}^{[r]}$ is reducible over $\C$. Then $F_{n+r-3}^{[r]}(x_1,x_2^2,...x_r^r)=g(x_1,...x_r)h(x_1,...x_r)$ where $g,h\in\C[x_1,...x_r]$ are non-constant, homogeneous polynomials. Thus \newline $g(x_1,x_2,x_3,0,...0)h(x_1,x_2,x_3,0,...0)=F_n^{[3]}(x_1,x_2^2,x_3^3)$ where \newline
$g(x_1,x_2,x_3,0,...0)$ and $h(x_1,x_2,x_3,0...0)$ cannot be constants, so \newline $F_n^{[3]}(x_1,x_2^2,x_3^3)$ is therefore reducible. This implies that $F_n^{[3]}$ is reducible over $\C$. Thus, it is enough to show that $F_{n}^{[3]}$ is irreducible for $n\geq 3$.

By \textbf{Corollary 2.5}, the Binet form of the $3$-Fibonacci polynomials is given by 
\begin{equation}
F_n^{[3]}(x_1,x_2,x_3)=\sum_{i+j+k=n-2}\lambda_1^i\lambda_2^j\lambda_3^k\end{equation}
where $\lambda_i$ are the solutions to the characteristic polynomial $z^3-x_1z^2-x_2z-x_3=0$. If $\lambda_1=s$, $\lambda_2=t$, $\lambda_3=u$, we can solve for $x_i$ in terms of $s, t, u$. So long as there are 3 distinct eigenvalues and equation (2.4) holds.
\footnotesize $$z^3-x_1z^2-x_2z-x_3=(z-s)(z-t)(z-u)=z^3-(s+t+u)z^2-(-st-su-tu)z - stu$$
\normalsize $$\implies x_1=s+t+u \hspace{5mm} x_2=-st-su-tu \hspace{5mm} x_3=stu $$

Since this parameterization of $x_1, x_2$, and $x_3$  has no algebraic relationship, the resulting polynomial is irreducible if and only if $F_n^{[3]}$ is irreducible.

The irreducibility of $\displaystyle \sum_{i+j+k=n-2}s^it^ju^k$ over $\C$ for all $n\geq3$ is proved in \cite{brunault_2912}, and thus $F_n^{[3]}$ is irreducible for all $n\geq3$.
\end{proof}

\section{Partitions}\label{sec:part}
In this section, we connect the $r$-Fibonacci polynomials to integer partitions and derive an associated explicit combinatorial formula for $F_n^{[r]}$.

To begin, an integer partition is denoted as $(1^{a_1}, 2^{a_2}, 3^{a_3},...)$, where $a_i$ is the multiplicity of $i$ in the multiset. Let $P_n(r)$ be the set of all partitions of $n$ with elements no greater than $r$. Lastly, we refer to partitions of size $n$ as $\rho \in P_n(r)$; $|\rho| = n$.

To demonstrate the connection between $r$-Fibonacci polynomials and $P_n(r)$, we define the following function.

\begin{definition}
Let $G = \displaystyle \sum_{(a_1,a_2,\dots,a_r)\in I} c_Ix_1^{a_1}x_2^{a_2}...x_r^{a_r} \in\Z[x_1,...x_r]$. Then $\Omega$ is defined by
$$\Omega(G) = \{(1^{a_{1}},2^{a_{2}},...,r^{a_{r}}) \; | \; (a_1,a_2,\dots,a_r)\in I\}$$
\end{definition}

For example, $\Omega(x_1^3+x_1x_2)=\Omega(x_1^3)\cup\Omega(x_1x_2)=\{(1^3),(1,2)\}$ generates two partitions of 3.

Note that if $G,H\in\Z[x_1,x_2,\dots,x_r]$ have positive integer coefficients, $$\Omega(G + H) = \Omega(G) \cup \Omega(H).$$

\begin{theorem}
For $n\geq r$, then
$\Omega(F_n^{[r]})=P_{n-r+1}(r)$ for $n\geq r$.
\end{theorem}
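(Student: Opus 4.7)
The plan is to proceed by induction on $n$, with base case $n = r-1$ where $F_{r-1}^{[r]} = 1$, so $\Omega(F_{r-1}^{[r]}) = \{\emptyset\} = P_0(r)$. Before treating the inductive step, a brief preliminary induction from the recursion shows that every coefficient of $F_n^{[r]}$ is a non-negative integer; this ensures that the identity $\Omega(G + H) = \Omega(G) \cup \Omega(H)$ from the note preceding the theorem applies without worry about cancellation.

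For the inductive step at $n \geq r$, apply the defining recursion
$$F_n^{[r]} = \sum_{i=1}^r x_i F_{n-i}^{[r]}$$
(summands with $n - i < r - 1$ are zero and may be ignored). The union property then yields
$$\Omega(F_n^{[r]}) = \bigcup_{i=1}^r \Omega(x_i F_{n-i}^{[r]}).$$
Multiplying a monomial $x_1^{a_1} \cdots x_r^{a_r}$ by $x_i$ raises $a_i$ by one, which at the partition level adjoins one extra part of size $i$. Combined with the inductive hypothesis $\Omega(F_{n-i}^{[r]}) = P_{n-i-r+1}(r)$ and the arithmetic $i + (n - i - r + 1) = n - r + 1$, this identifies $\Omega(x_i F_{n-i}^{[r]})$ with the subset of $P_{n-r+1}(r)$ consisting of partitions containing at least one part equal to $i$.

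It remains to verify that the union of these subsets over $i = 1, \ldots, r$ exhausts $P_{n-r+1}(r)$. One inclusion is clear from the matching size and the parts-bound. For the reverse, any $\rho \in P_{n-r+1}(r)$ has $|\rho| \geq 1$ (since $n \geq r$ implies $n - r + 1 \geq 1$), so $\rho$ contains some part $i \in \{1, \ldots, r\}$; removing that part yields a partition in $P_{n-i-r+1}(r)$, witnessing $\rho \in \Omega(x_i F_{n-i}^{[r]})$. The most delicate point is the boundary bookkeeping near the initial conditions, in particular the convention $\Omega(1) = \{\emptyset\}$ both for the base case and for any summand with $n - i = r - 1$; beyond that, the argument is simply a direct unpacking of the recursion through the natural correspondence between the exponent of $x_i$ and the multiplicity of the part $i$.
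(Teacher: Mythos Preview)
Your proof is correct and follows essentially the same route as the paper's: strong induction on $n$, decomposing $\Omega(F_n^{[r]})$ via the recursion into $\bigcup_i \Omega(x_iF_{n-i}^{[r]})$, and then verifying the two inclusions with $P_{n-r+1}(r)$ by tracking what adjoining a part $i$ does to a partition. The only cosmetic differences are that you anchor the induction at $n=r-1$ (using $\Omega(1)=\{\emptyset\}=P_0(r)$) rather than at $n=r$, and you are more explicit than the paper about the non-negativity of coefficients and the boundary bookkeeping; the underlying argument is the same.
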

\begin{proof}
We proceed with strong induction on $n$ with the base case given by $n=r$: $$\Omega(F_{r}^{[r]})=\Omega(x_1)=\{(1)\}=P_1(r) $$

Assume equality holds for integers between $r$ and $n$. By the properties of $\Omega$, 

\begin{equation}
    \Omega(F_{n+1}^{[r]})=\bigcup_{i=1}^{r}\Omega(x_iF_{n-i+1}^{[r]})
\end{equation}

Observe that by the inductive hypothesis, for some partition $\rho\in\Omega(x_iF_{n-i+1}^{[r]})$, $|\rho|=i+(n-r+2-i)=n-r+2$. Therefore all partitions in $\Omega(F_{n+1}^{[r]})$ have size $k-r+2$, so $\Omega(F_{k+1}^{[r]})\subseteq P_{n-r+2}(r)$. 

We must now show that $P_{n-r+2}(r)\subseteq\Omega(F_{n+1}^{[r]})$. Let $\rho_{i}\in P_{k-r+2}(r)$. Then $\rho_{i}=(i,\rho')$ where $1\leq i \leq r$ and $\rho'\in P_{k-r+2-i}(r)=\Omega(F^{[r]}_{k-r+2-i})$. Therefore, $\rho_i\in\Omega(x_iF^{[r]}_{n-k+1-i})$. Thus, by equation (2.4), $\rho_i\in P_{k-r+2}(r)$.

By double inclusion, $P_{k-r+2}(r)=\Omega(F^{[r]}_{k+1})$.

\end{proof}

Using this characterization, we derive the following combinatorial formula for the $r$-Fibonacci polynomials.

\begin{theorem}
$$F_n^{[r]} = \sum_{\substack{\alpha_1, \dots \alpha_r\geq0 \\
\alpha_1+2\alpha_2\cdots +r\alpha_r = n-r+1}} \binom{\alpha_1 + \alpha_2 \dots + \alpha_r}{\alpha_1, \alpha_2 \dots,\alpha_r}x_1^{\alpha_1}x_2^{\alpha_2}\cdots x_r^{\alpha_r}$$
\end{theorem}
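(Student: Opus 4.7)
The plan is to proceed by strong induction on $n$, building on the recurrence $F_n^{[r]} = \sum_{i=1}^r x_i F_{n-i}^{[r]}$. Theorem 3.2 already tells us that the monomials appearing in $F_n^{[r]}$ are precisely those $x_1^{\alpha_1}\cdots x_r^{\alpha_r}$ with $\alpha_1+2\alpha_2+\cdots+r\alpha_r = n-r+1$ and $\alpha_i \geq 0$, so only the coefficients need to be identified.

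For the base case I would take $n = r$, where $F_r^{[r]} = x_1$ and the only admissible tuple is $(\alpha_1,\ldots,\alpha_r) = (1,0,\ldots,0)$, giving multinomial coefficient $\binom{1}{1,0,\ldots,0}=1$. (One can also check $n = r-1$ gives $F_{r-1}^{[r]}=1$ matching the empty tuple.) For the inductive step, I would substitute the inductive hypothesis for each $F_{n-i}^{[r]}$ with $1 \leq i \leq r$ into the recurrence. Multiplying by $x_i$ shifts the exponent tuple $(\beta_1,\ldots,\beta_r)$ from $F_{n-i}^{[r]}$ to $(\alpha_1,\ldots,\alpha_r)$ with $\alpha_j = \beta_j + \delta_{ij}$; the degree constraint $\beta_1+2\beta_2+\cdots+r\beta_r = (n-i)-r+1$ then becomes $\alpha_1+2\alpha_2+\cdots+r\alpha_r = n-r+1$, as required. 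Collecting the coefficient of a fixed monomial $x_1^{\alpha_1}\cdots x_r^{\alpha_r}$ gives
\begin{equation*}
[x_1^{\alpha_1}\cdots x_r^{\alpha_r}]\,F_n^{[r]} = \sum_{i\,:\,\alpha_i\geq 1} \binom{N-1}{\alpha_1,\ldots,\alpha_i-1,\ldots,\alpha_r},
\end{equation*}
where $N = \alpha_1+\cdots+\alpha_r$.

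The remaining step is the standard multinomial absorption identity
\begin{equation*}
\sum_{i\,:\,\alpha_i\geq 1} \binom{N-1}{\alpha_1,\ldots,\alpha_i-1,\ldots,\alpha_r} = \binom{N}{\alpha_1,\ldots,\alpha_r},
\end{equation*}
which has a one-line combinatorial proof: classify words of length $N$ over the alphabet $\{1,\ldots,r\}$ with letter $i$ appearing $\alpha_i$ times by their last letter. This closes the induction.

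I do not expect a serious obstacle; the main thing to be careful about is handling the boundary terms $i$ for which $\alpha_i = 0$ (these contribute nothing to the sum, matching the fact that the corresponding $x_i F_{n-i}^{[r]}$ summand does not produce the monomial in question) and verifying that the inductive hypothesis is legitimately available at indices $n-i$ for each $1 \leq i \leq r$, which is why the base case is taken at $n=r$ (and why the convention $F_m^{[r]}=0$ for $m<r-1$ is compatible with the empty sum on the right-hand side). As a brief alternative worth mentioning, once the multinomial identity has been absorbed into the argument, the same reasoning yields a purely combinatorial reading: $F_n^{[r]}$ is the generating polynomial for compositions of $n-r+1$ with parts in $\{1,\ldots,r\}$, where a part of size $i$ contributes a factor of $x_i$, and the multinomial coefficient simply counts the number of such compositions with prescribed part multiplicities.
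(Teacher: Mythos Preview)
Your proof is correct and follows essentially the same approach as the paper: strong induction on $n$ using the recurrence, reducing to the multinomial recursion $\sum_{i}\binom{N-1}{\alpha_1,\ldots,\alpha_i-1,\ldots,\alpha_r}=\binom{N}{\alpha_1,\ldots,\alpha_r}$. The paper verifies this last identity by the algebraic manipulation $\sum_i \alpha_i\,\dfrac{(N-1)!}{\alpha_1!\cdots\alpha_r!}=\dfrac{N!}{\alpha_1!\cdots\alpha_r!}$ rather than your combinatorial last-letter argument, but the arguments are otherwise the same.
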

\begin{proof}
By \textbf{Theorem 3.3}, $F_n^{[r]}$ can be written in the form
$$F_n^{[r]} = \sum_{\substack{\alpha_1, \dots \alpha_r\geq0 \\
\sum_{i=1}^r i\alpha_i = n-r+1}} c_n(\alpha_1,...\alpha_r)x_1^{\alpha_1}x_2^{\alpha_2}\cdots x_r^{\alpha_r}$$
where $c_n(\alpha_1,...\alpha_r)\in\Z^+$ denotes the corresponding coefficient. We proceed by induction. 

When $0\leq n<r-1$, the formula is true since the sum index is the empty set. For $n=r-1$, we have that $F_{r-1}^{[r]}=1=\binom{0}{0,\cdots0}$. Thus the formula works for the first $r$ cases.

Presume that for $n\leq k$, $c_n(\alpha_1,...\alpha_r)=\binom{\alpha_1+\cdots+\alpha_2}{\alpha_1,...\alpha_r}$. Then, when $1\leq j\leq r$, by the recursive definition of $F_n^{[r]}$, 
\begin{align*}
    c_{k+1}(\alpha_1,...\alpha_r)&=\sum_{i=1}^rc_{k-i+1}(\alpha_1,...,\alpha_i-1,\alpha_{i+1},...\alpha_r)\\
    &=\sum_{i=1}^r\frac{\alpha_i(\alpha_1+\cdots+\alpha_r-1)!}{\alpha_1!\cdots\alpha_i!\cdots\alpha_r!}\\
   &=(\alpha_1+\cdots+\alpha_r-1)!\sum_{i=1}^r\frac{\alpha_i}{\alpha_1!\cdots\alpha_r!}\\
   &=\frac{(\alpha_1+\cdots+\alpha_r)!}{\alpha_1!\cdots\alpha_r!}
\end{align*}

\end{proof}

\begin{rem}
The coefficients of the monomials in $F_n^{[r]}$ correspond precisely with the number of ways to rearrange the elements of the corresponding partition.
\end{rem}

By \textbf{Theorem 3.4}, the $r$-Bonacci polynomials can be represented with a more explicit sum index by using iterated sums.

\begin{corollary}
Letting $\alpha_1=n-r+1-\sum_{k=2}^r k\alpha_k$, then

\tiny $$F_n^{[r]}=\sum_{\alpha_r=0}^{\lfloor {\frac{n-r+1}{r}} \rfloor}
\cdots
\sum_{\alpha_{r-i}=0}^{\lfloor {\frac{n-r+1-\sum_{k=0}^{i-1}(r-k)\alpha_{r-k}}{r-i}} \rfloor}
\cdots
\sum_{\alpha_{2}=0}^{\lfloor {\frac{n-r+1-\sum_{k=3}^{r}(k)\alpha_{k}}{2}} \rfloor}
 \binom{\alpha_1 + \dots + \alpha_r}{\alpha_1,\dots,\alpha_r}x_1^{\alpha_1}x_2^{\alpha_2}\cdots x_r^{\alpha_r}.$$
\end{corollary}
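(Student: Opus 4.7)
The plan is to derive this nested-sum representation directly from Theorem 3.4 by peeling off one summation index at a time, using the single linear constraint $\alpha_1 + 2\alpha_2 + \cdots + r\alpha_r = n-r+1$ together with the nonnegativity conditions $\alpha_i \geq 0$ to determine explicit upper bounds.

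First, I would use the constraint to eliminate $\alpha_1$, writing $\alpha_1 = n - r + 1 - \sum_{k=2}^r k\alpha_k$; this reduces the sum in Theorem 3.4 to an unconstrained (up to inequalities) sum over $(\alpha_2, \ldots, \alpha_r)$. The nonnegativity $\alpha_1 \geq 0$ becomes the global inequality $\sum_{k=2}^r k\alpha_k \leq n - r + 1$, which we then translate into iterated inequalities on the individual $\alpha_k$'s.

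Next, I would order the summations starting from $\alpha_r$ and working down to $\alpha_2$. Since $r\alpha_r \leq n - r + 1$ and $\alpha_r \in \mathbb{Z}_{\geq 0}$, we get the outermost bound $0 \leq \alpha_r \leq \lfloor (n-r+1)/r \rfloor$. Once $\alpha_r$ is fixed, the remaining budget is $n - r + 1 - r\alpha_r$, so $(r-1)\alpha_{r-1} \leq n - r + 1 - r \alpha_r$ yields the second bound. Iterating this, at depth $i$ (when $\alpha_r, \alpha_{r-1}, \ldots, \alpha_{r-i+1}$ are fixed) the index $\alpha_{r-i}$ must satisfy
\[
0 \leq \alpha_{r-i} \leq \left\lfloor \frac{n-r+1 - \sum_{k=0}^{i-1}(r-k)\alpha_{r-k}}{r-i} \right\rfloor,
\]
which is precisely the bound in the claimed formula. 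The innermost bound ($i = r-2$) gives the range on $\alpha_2$.

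The main (and essentially only) thing to verify carefully is that these iterated bounds are equivalent to the original single constraint together with $\alpha_i \geq 0$ for all $i$; this is a straightforward telescoping argument, since at each stage the upper bound is exactly the integer part of the fraction forced by ``whatever remains of $n-r+1$ after the larger-indexed variables take their share.'' Once this equivalence of index sets is established, the multinomial coefficient and monomial in the summand are carried over verbatim from Theorem 3.4, completing the proof. I do not anticipate any serious obstacle here; the content of the corollary is essentially notational bookkeeping, converting a constrained sum into an explicit nested one.
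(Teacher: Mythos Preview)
Your proposal is correct and follows essentially the same approach as the paper: eliminate $\alpha_1$ via the constraint $\alpha_1 = n-r+1 - \sum_{k=2}^r k\alpha_k$, then observe that the iterated bounds are exactly the inequalities equivalent to $\sum_{k=2}^r k\alpha_k \le n-r+1$ with all $\alpha_i \ge 0$, so the nested sum reproduces the index set of Theorem~3.4. The paper's own proof is terser but makes the identical argument.
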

\begin{proof}
The iterated sums index over all sets of $\alpha_r, ... \alpha_2$ such that \newline $\sum_{k=2}^rk\alpha_k \leq n-r+1$. Thus, by letting  $\alpha_1=(n-r+1)-\sum_{k=2}^rk\alpha_k$, we can see the summations indexes over all partitions of $n-r+1$.
\end{proof}

\begin{rem}
This generalizes the combinatorial sum formula in \cite{ArsComb}.
\end{rem}

\section{Generating Functions}\label{sec:gen}
The generating function for Fibonacci numbers $f_n$ is known to be \begin{equation}
  \sum_{n=1}^\infty f_{n} x^n = \frac{x}{1-x-x^2}  
\end{equation}

for $|x+x^2| < 1$, $f_0 = 0$, and $f_1 = 1$. In this section, we develop a similar generating function for the $r$-Fibonacci polynomials and derive properties and identities connecting $r$-Fibonacci polynomials to standard combinatorial sequences. 

\subsection{Generating Function of the $r$-Fibonnaci Polynomials}
\begin{theorem}
Let $|x_1z| + |x_2z^2| + \dots + |x_rz^r| < 1$. Then the generating function of the $r$-Fibonacci polynomials converges to
$$\sum_{n=0}^\infty F_{n+r-1}^{[r]}(x_1,\dots,x_r)z^n = \frac{1}{1 - x_1z - x_2z^2 - \dots - x_rz^r}$$
\end{theorem}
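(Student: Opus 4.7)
The plan is to show the identity by multiplying both sides by the denominator $1 - x_1 z - x_2 z^2 - \dots - x_r z^r$ and verifying that the resulting product of power series equals $1$, using the defining recursion of $F_n^{[r]}$. Let $G(z) = \sum_{n=0}^\infty F_{n+r-1}^{[r]} z^n$. Computing
$$G(z) \Bigl(1 - \sum_{i=1}^r x_i z^i\Bigr) = \sum_{n=0}^\infty F_{n+r-1}^{[r]} z^n - \sum_{i=1}^r \sum_{n=0}^\infty x_i F_{n+r-1}^{[r]} z^{n+i},$$
I would reindex the second double sum by $m = n+i$ and collect coefficients of $z^m$.

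The coefficient of $z^0$ on the right is $F_{r-1}^{[r]} = 1$. For each $m \geq r$, the coefficient of $z^m$ is $F_{m+r-1}^{[r]} - \sum_{i=1}^r x_i F_{m+r-1-i}^{[r]}$, which vanishes by the recursion since $m+r-1 \geq 2r-1 \geq r$. The intermediate range $1 \leq m \leq r-1$ is the only subtle case: the inner sum on $i$ only extends to $m$ (since we need $n = m-i \geq 0$), so I would complete the sum up to $i = r$ at no cost, observing that $F_{m-i+r-1}^{[r]} = 0$ whenever $i > m$ because then $0 \leq m-i+r-1 < r-1$, which falls into the zero-initial-condition range of the definition. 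This reduces the coefficient to $F_{m+r-1}^{[r]} - \sum_{i=1}^r x_i F_{m+r-1-i}^{[r]} = 0$ by the recursion.

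For the analytic statement, I would invoke the hypothesis $|x_1 z| + \dots + |x_r z^r| < 1$, which gives $\bigl|\sum_{i=1}^r x_i z^i\bigr| < 1$. On this domain both the geometric series $\frac{1}{1 - \sum x_i z^i} = \sum_{k \geq 0} (\sum x_i z^i)^k$ converges absolutely, and one can justify term-by-term manipulation of $G(z)$ against the polynomial factor. An alternative route, should it be preferred for brevity, is to apply the multinomial theorem to $(\sum x_i z^i)^k$, re-index by $m = \sum i \alpha_i$, and identify the resulting coefficient with the combinatorial sum formula of \textbf{Theorem 3.4}; this directly matches $F_{m+r-1}^{[r]}$ and gives the identity with no recursion bookkeeping.

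The only genuinely delicate step is handling the boundary range $1 \leq m \leq r-1$ in the recursion-based proof, where the shift $m = n+i$ truncates the $i$-sum and one must use the vanishing of $F_k^{[r]}$ for $0 \leq k < r-1$ to restore the full recursion. Everything else is routine power-series algebra and a standard absolute-convergence estimate.
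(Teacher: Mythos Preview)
Your proof is correct but takes a different primary route from the paper. The paper expands $\frac{1}{1-\sum_i x_i z^i}$ as the geometric series $\sum_{k\geq 0}\bigl(\sum_i x_i z^i\bigr)^k$, applies the multinomial theorem, and then identifies the coefficient of $z^n$ with the combinatorial sum formula of Theorem~3.4, thereby reading off $F_{n+r-1}^{[r]}$ directly. You instead multiply $G(z)$ by the denominator and verify the product is $1$ using the recursion, with a careful treatment of the boundary range $1\leq m\leq r-1$; your handling there (extending the truncated $i$-sum using $F_k^{[r]}=0$ for $0\leq k<r-1$) is correct. Your ``alternative route'' is in fact exactly the paper's argument. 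The trade-off: your recursion approach is self-contained and does not depend on Theorem~3.4, at the cost of the boundary bookkeeping; the paper's approach is shorter once Theorem~3.4 is available and makes the link to the explicit multinomial coefficients transparent.
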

\begin{proof} Because $|x_1z| + |x_2z^2| + \dots + |x_rz^r| < 1$, 
$$\frac{1}{1 - \sum_{i=1}^rx_iz^i} = \sum_{n=0}^\infty \left(\sum_{i=1}^rx_iz^i \right)^n$$ converges absolutely. By rearranging the sum, we find that
\small \begin{align*}\sum_{n=0}^\infty \left(\sum_{i=1}^rx_iz^i \right)^n &=1+(x_1z+x_2z^2+...+x_rz^r)+(x_1z+x_2z^2+...+x_rz^r)^2+...\\
&=1+(x_1)z+(x_1^2+x_2)z^2+(x_1^3+2x_1x_2+x_3)z^3+...\\
&=\sum_{n=0}^\infty \left(\sum_{\substack{\alpha_1, \dots \alpha_r\geq0 \\
\alpha_1+2\alpha_2\cdots +r\alpha_r = n}} \binom{\alpha_1 + \dots + \alpha_r}{\alpha_1,\dots,\alpha_r}x_1^{\alpha_1}x_2^{\alpha_2}\cdots x_r^{\alpha_r} \right)z^n\\
&=\sum_{n=0}^\infty F_{n+r-1}^{[r]}(x_1,\dots,x_r)z^n\end{align*}
\end{proof}

\begin{rem}
Equation (4.1) is a special case of \textbf{Theorem 4.1}, where $r=2$, and $x_1=x_2=1$.
\end{rem}

Using a similar argument, a similar generating function can be written in terms of $r$-Fibonacci polynomials where $r = \omega$.

\begin{theorem}
Let $g(z)=\sum_{k=1}^{\infty}c_kz^k$ with $|g(z)|<1$. Then

$$\sum_{n=1}^{\infty}F^{[n]}_{2n-1}(c_1,...c_n)z^n=\frac{g(z)}{1-g(z)}$$
\end{theorem}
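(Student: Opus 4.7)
My plan is to reduce the identity to Theorem 4.1 by matching coefficients of $z^n$ on both sides. First I would rewrite the right-hand side as
$$\frac{g(z)}{1 - g(z)} = \frac{1}{1 - g(z)} - 1,$$
so that for every $n \geq 1$ the coefficient $[z^n]\frac{g(z)}{1 - g(z)}$ equals $[z^n]\frac{1}{1 - g(z)}$. Since $|g(z)| < 1$, the geometric expansion $\frac{1}{1 - g(z)} = \sum_{k \geq 0} g(z)^k$ converges absolutely. The key observation is that only the coefficients $c_1, \ldots, c_n$ of $g$ can contribute to $[z^n]$: expanding $g(z)^k = (\sum_{i \geq 1} c_i z^i)^k$ and collecting the $z^n$ term, every factor $c_i z^i$ used must have $i \leq n$, or else the product's exponent would exceed $n$.

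Consequently, for each fixed $n \geq 1$,
$$[z^n] \frac{1}{1 - g(z)} = [z^n] \frac{1}{1 - c_1 z - c_2 z^2 - \cdots - c_n z^n}.$$
Now I would invoke \textbf{Theorem 4.1} with $r = n$ and variables $x_i = c_i$: it identifies the coefficient of $z^m$ in $1/(1 - c_1 z - \cdots - c_n z^n)$ as $F^{[n]}_{m+n-1}(c_1, \ldots, c_n)$. Setting $m = n$ yields $F^{[n]}_{2n-1}(c_1, \ldots, c_n)$, and summing these contributions over $n \geq 1$ gives the desired generating-function identity.

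The main subtlety is that the number $r = n$ of variables in the Fibonacci-like polynomial changes with the summation index $n$, so one cannot invoke \textbf{Theorem 4.1} for a single fixed $r$; the rewriting $\frac{g}{1-g} = \frac{1}{1-g} - 1$ together with the truncation observation circumvents this by letting us apply \textbf{Theorem 4.1} separately at each level $n$. A minor bookkeeping point is to ensure that the hypothesis $|g(z)| < 1$ supplies the absolute convergence needed both for the geometric series and for the application of \textbf{Theorem 4.1} at each level; this can be handled by working on a small enough disk around the origin where $\sum_{i=1}^n |c_i z^i| < 1$ holds for every $n$, which follows from the convergence of $g$ and the hypothesis on $|g(z)|$.
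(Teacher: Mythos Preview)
Your proposal is correct and follows essentially the same route as the paper: both subtract $1$ from $\frac{1}{1-g(z)}$, observe that only $c_1,\dots,c_n$ can contribute to the coefficient of $z^n$, and then invoke Theorem~4.1 with $r=n$ at each level. Your write-up is in fact more explicit than the paper's about the truncation step and the convergence bookkeeping, but the underlying argument is identical.
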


\begin{proof}
The argument follows as in \textbf{Theorem 4.1}, but because there are always exactly $n$ variables in the coefficient of $z^n$ for the infinite-variate case, we set $r=n$.
$$1 + \sum_{n=1}^{\infty}F^{[n]}_{2n-1}(c_1,...c_n)z^n = \frac{1}{1-g(z)}$$
$$\sum_{n=1}^{\infty}F^{[n]}_{2n-1}(c_1,...c_n)z^n=\frac{g(z)}{1-g(z)}$$
\end{proof}

\subsection{Applications of the Generating Function}\hspace{1mm}

Using \textbf{Theorem 4.1} and \textbf{4.2}, we will derive identities and generating functions of other sequences. 

\begin{corollary}
If $|x_1|+...+|x_r|<1$, then $\displaystyle \lim_{n\rightarrow\infty}F_n^{[r]}=0$.
\end{corollary}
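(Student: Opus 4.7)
The plan is to reduce the problem to the nonnegative-coefficient case via the triangle inequality and then invoke the generating function from Theorem 4.1 at $z=1$. The combinatorial formula of Theorem 3.4 expresses $F_n^{[r]}(x_1,\dots,x_r)$ as a sum of monomials in $x_1,\dots,x_r$ with positive multinomial coefficients, so applying the triangle inequality termwise yields
$$|F_n^{[r]}(x_1,\dots,x_r)| \;\le\; F_n^{[r]}(|x_1|,\dots,|x_r|).$$
Thus it suffices to prove the statement when the arguments are nonnegative reals with $|x_1|+\cdots+|x_r|<1$.

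Next, I would apply Theorem 4.1 with $z=1$. The hypothesis of that theorem, $|x_1 z|+|x_2 z^2|+\cdots+|x_r z^r|<1$, becomes precisely $|x_1|+\cdots+|x_r|<1$ at $z=1$, which is the assumption. Hence
$$\sum_{n=0}^\infty F_{n+r-1}^{[r]}(|x_1|,\dots,|x_r|) \;=\; \frac{1}{1-\bigl(|x_1|+|x_2|+\cdots+|x_r|\bigr)} \;<\; \infty.$$
Because this is a convergent series of nonnegative terms, the divergence test forces $F_{n+r-1}^{[r]}(|x_1|,\dots,|x_r|)\to 0$. Combining this with the triangle-inequality bound from the first paragraph gives $|F_n^{[r]}(x_1,\dots,x_r)|\to 0$, as desired.

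There is essentially no obstacle here: both key inputs, Theorem 3.4 and Theorem 4.1, have already been established, and the argument is a two-line reduction once the right tool is identified. The only point requiring any care is noting that substituting $z=1$ into the generating function is legitimate under the stated hypothesis, which is immediate from the absolute convergence established in the proof of Theorem 4.1.
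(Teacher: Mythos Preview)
Your proof is correct and follows essentially the same route as the paper: set $z=1$ in Theorem 4.1, observe that the hypothesis $|x_1|+\cdots+|x_r|<1$ guarantees convergence of $\sum_{n\ge 0} F_{n+r-1}^{[r]}$, and conclude via the divergence test that the terms tend to $0$. The only difference is that you insert a preliminary reduction to nonnegative arguments via the triangle inequality and Theorem 3.4; this step is harmless but unnecessary, since the divergence test already applies to any convergent series regardless of the sign (or complexity) of its terms.
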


\begin{proof}
By setting $z=1$ in \textbf{Theorem 4.1}, we know that \newline $\sum_{k=0}^{\infty}F_{n+r-1}^{[r]}(x_1,...x_r)$ converges for $|x_1|+ ... +|x_r|<1$. Thus $\displaystyle \lim_{n\rightarrow\infty}F_n^{[r]}=0$.
\end{proof}

\begin{theorem}
Let $f_n=F_n^{[2]}(1,1)$ be the $n$th Fibonacci number, $p_n$ be the $n$th Pell number, and $-1<z<0$. Then, 
$$\sum_{n=1}^\infty p_nz^n = \sum_{n=1}^\infty F^{[n]}_{2n-1}(f_1,f_2,\dots, f_n)z^n$$
\end{theorem}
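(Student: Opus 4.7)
The plan is to recognize the right-hand side as an instance of \textbf{Theorem 4.2} with $c_k = f_k$, compute the resulting generating function in closed form, and match it against the known Pell generating function.

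First I would recall the standard Fibonacci generating function
$$g(z) := \sum_{k=1}^\infty f_k z^k = \frac{z}{1-z-z^2},$$
which is valid in a neighborhood of $0$ on the real axis. Next I would verify the hypothesis $|g(z)| < 1$ of \textbf{Theorem 4.2} on the stated domain: for $z$ negative and sufficiently close to $0$, $g(z)$ is negative and tends to $0$, so $|g(z)| < 1$ holds (the stated interval $-1 < z < 0$ has to be intersected with $|z| < 1/\varphi$ anyway for the Fibonacci series to converge, so the verification is automatic on the effective domain of convergence).

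With the hypothesis in place, \textbf{Theorem 4.2} gives
$$\sum_{n=1}^\infty F^{[n]}_{2n-1}(f_1,\dots,f_n)z^n \;=\; \frac{g(z)}{1-g(z)} \;=\; \frac{z/(1-z-z^2)}{1 - z/(1-z-z^2)} \;=\; \frac{z}{1-2z-z^2}.$$
The last expression is the standard generating function for the Pell numbers, obtained directly from the recurrence $p_n = 2p_{n-1} + p_{n-2}$ with $p_0=0$, $p_1=1$ (multiply $\sum p_n z^n$ by $1-2z-z^2$ and telescope). Hence
$$\sum_{n=1}^\infty p_n z^n \;=\; \frac{z}{1-2z-z^2} \;=\; \sum_{n=1}^\infty F^{[n]}_{2n-1}(f_1,\dots,f_n)z^n,$$
and equating coefficients of $z^n$ yields the identity.

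There is no substantive obstacle here beyond the algebraic simplification $g/(1-g) = z/(1-2z-z^2)$ and the convergence bookkeeping; all the combinatorial content has been absorbed into \textbf{Theorem 4.2}. The only mildly delicate point is the convergence domain, which one should either shrink to $|z| < 1/\varphi$ (so that the inner Fibonacci series converges absolutely) or reinterpret the whole identity as an equality of formal power series, in which case the coefficient comparison is unconditional.
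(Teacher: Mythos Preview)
Your argument is correct and matches the paper's proof step for step: invoke \textbf{Theorem 4.2} with $c_k=f_k$, simplify $g/(1-g)$ to $z/(1-2z-z^2)$, and identify this as the Pell generating function (the paper writes the last step as $\sum F_n^{[2]}(2,1)z^n$). One small discrepancy worth noting: your closing line about ``equating coefficients'' goes beyond the paper---indeed the paper's remark immediately after the theorem asserts $p_n\neq F^{[n]}_{2n-1}(f_1,\dots,f_n)$---but your version is the correct one, since two power series agreeing on an open interval must have identical coefficients (and direct computation for small $n$ confirms $p_n=F^{[n]}_{2n-1}(f_1,\dots,f_n)$).
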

\begin{proof}
For $-1<z<0$, equation (4.1) gives us
$$\sum_{n=0}^\infty f_nz^n = \frac{z}{1-z-z^2} < 1$$
By \textbf{Theorem 4.3},
\begin{align*}
\frac{\sum_{n=1}^\infty f_nz^n}{1-\sum_{n=1}^\infty f_nz^n} &= \frac{\frac{z}{1-z-z^2}}{1-\frac{z}{1-z-z^2}} = \frac{z}{1-2z-z^2} = \sum_{n=1}^\infty F^{[2]}_{n}(2,1)z^n = \sum_{n=1}^\infty p_nz^n\\
&= \sum_{n=1}^\infty F^{[n]}_{2n-1}(f_1,f_2,\dots, f_n)z^n
\end{align*}
\end{proof}
\begin{rem}
$p_n \neq F^{[n]}_{2n-1}(f_1,f_2,\dots,f_n)$. The theorem is only true for $-1<z<0$, thus the coefficients of each of the generating series are not equal.
\end{rem}

We can also use the generating function to show that the $r$-Fibonacci polynomials can be manipulated to generate preference orderings, which are orderings of partitions of a set of size $n$. The Fubini numbers, denoted $a_n$, are defined to be the number of preference orderings of size $n$. 
\begin{theorem}
Let $\mathcal{O}_n{(\alpha_1,\alpha_2,\dots,\alpha_r)}$ denote the number of preference orderings for size $n$ with $\alpha_i$ partitions of size $i$. Then,
\footnotesize $$n!F^{[r]}_{n+r-1}\left(x_1,\frac{1}{2!}x_2,\frac{1}{3!}x_3,\dots,\frac{1}{r!}x_r\right)=\sum_{\substack{\alpha_1, \dots \alpha_r\geq0 \\
\sum_{i=1}^r i\alpha_i = n}} \mathcal{O}_n{(\alpha_1,\alpha_2,\dots,\alpha_r)} x_1^{\alpha_1}x_2^{\alpha_2}\dots x_r^{\alpha_r}$$
\end{theorem}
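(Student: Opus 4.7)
The plan is to reduce the identity to a combinatorial counting formula for $\mathcal{O}_n(\alpha_1,\dots,\alpha_r)$ and then apply Theorem 3.4 directly, with the factorial substitutions matching up the two sides.

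First I would establish a closed form for $\mathcal{O}_n(\alpha_1,\dots,\alpha_r)$. A preference ordering is an ordered set partition, so it is determined by two independent choices: (i) a sequence of block sizes consisting of $\alpha_1$ ones, $\alpha_2$ twos, \dots, $\alpha_r$ $r$'s, and (ii) an assignment of the $n$ elements to the blocks of that ordered sequence. There are $\binom{\alpha_1+\cdots+\alpha_r}{\alpha_1,\dots,\alpha_r}$ sequences of the first kind, and for any such sequence with block sizes $s_1,\dots,s_k$ there are $\binom{n}{s_1,\dots,s_k}=\frac{n!}{\prod_i(i!)^{\alpha_i}}$ assignments. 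Multiplying,
\begin{equation*}
\mathcal{O}_n(\alpha_1,\dots,\alpha_r)=\binom{\alpha_1+\cdots+\alpha_r}{\alpha_1,\dots,\alpha_r}\cdot\frac{n!}{\prod_{i=1}^r(i!)^{\alpha_i}}.
\end{equation*}

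Next I would invoke Theorem 3.4, which expresses
\begin{equation*}
F_{n+r-1}^{[r]}(y_1,\dots,y_r)=\sum_{\substack{\alpha_1,\dots,\alpha_r\geq 0\\ \sum_{i=1}^r i\alpha_i=n}}\binom{\alpha_1+\cdots+\alpha_r}{\alpha_1,\dots,\alpha_r}y_1^{\alpha_1}\cdots y_r^{\alpha_r},
\end{equation*}
and then substitute $y_i=\frac{x_i}{i!}$, which produces a factor $\prod_i(i!)^{-\alpha_i}$ in each summand. Multiplying through by $n!$ distributes the factorial inside the sum, and the resulting coefficient of $x_1^{\alpha_1}\cdots x_r^{\alpha_r}$ is exactly $n!\binom{\alpha_1+\cdots+\alpha_r}{\alpha_1,\dots,\alpha_r}\prod_i(i!)^{-\alpha_i}$, which by the formula above equals $\mathcal{O}_n(\alpha_1,\dots,\alpha_r)$.

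The only real obstacle is justifying the closed form for $\mathcal{O}_n$; once that combinatorial step is in place, the identity is a one-line substitution into Theorem 3.4. I would flag, as a sanity check, that setting all $x_i=1$ recovers the Fubini number $a_n=\sum_{\alpha}\mathcal{O}_n(\alpha_1,\dots,\alpha_r)$, consistent with the exponential generating function $\sum a_n z^n/n!=1/(2-e^z)$ arising from $g(z)=e^z-1$ in Theorem 4.3.
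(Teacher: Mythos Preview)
Your proof is correct and more direct than the paper's. You establish the closed form
\[
\mathcal{O}_n(\alpha_1,\dots,\alpha_r)=\binom{\alpha_1+\cdots+\alpha_r}{\alpha_1,\dots,\alpha_r}\cdot\frac{n!}{\prod_{i=1}^r(i!)^{\alpha_i}}
\]
by an elementary count of ordered set partitions, then substitute $y_i=x_i/i!$ into the multinomial expansion of Theorem~3.4 and read off the identity coefficientwise. The paper instead works on the generating-function side: it starts from the exponential generating function $1/(2-e^z)$ for the Fubini numbers, inserts variables $x_i$ into the expansion $1/(1-\sum_i x_iz^i/i!)$, applies Theorem~4.2 to identify the coefficients as $F^{[n]}_{2n-1}(x_1,x_2/2!,\dots)$, and then appeals to an external reference for the combinatorial interpretation of the resulting coefficients before truncating to $r$ variables. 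Your route avoids generating functions entirely and needs no outside citation; the paper's route ties the result into the generating-function machinery of Section~4 but leaves the key combinatorial step (why the $x_i$-refined coefficients count what they do) to a reference. Both arrive at the same place, but yours is self-contained.
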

\begin{proof}
For $z<\ln(2)$, the exponential generating function for the Fubini numbers is given by \begin{equation}
    \sum_{n=1}^\infty\frac{a_n}{n!}z^n=\frac{1}{2-e^z} = \frac{1}{1-\sum_{i=1}^{\infty} \frac{1}{i!}z^i}
\end{equation}
By \textbf{Theorem 4.3},
$$
\frac{1}{1-\sum_{i=1}^{\infty} \frac{1}{i!}x_i z^i}=\sum_{n=0}^{\infty} F^{[n]}_{2n-1}\left(x_1,\frac{1}{2!}x_2,\frac{1}{3!}x_3,\dots,\frac{1}{n!}x_n\right)z^n
$$
Utilizing the composition and combinatorial ideas in the proof of (4.2) (see \cite{Fubini}), we find that adding the $x_i$ coefficients distinguishes between partitions of different sizes. Thus,
$$\frac{1}{1-\sum_{i=1}^{\infty} \frac{1}{i!}x_i z^i}=\sum_{n=0}^{\infty}\left(\sum_{\substack{\alpha_1, \dots \alpha_r\geq0 \\
\sum_{i=1}^r i\alpha_i = n}} \mathcal{O}_n{(\alpha_1,\alpha_2,\dots,\alpha_r)} x_1^{\alpha_1}x_2^{\alpha_2}\dots x_r^{\alpha_r}\right)z^n$$
By setting $x_i=0$ for $i>r$ and comparing coefficients of the generating functions we get the desired equality:
\footnotesize $$n!F^{[r]}_{n+r-1}\left(x_1,\frac{1}{2!}x_2,\frac{1}{3!}x_3,\dots,\frac{1}{r!}x_r\right)=\sum_{\substack{\alpha_1, \dots \alpha_r\geq0 \\
\sum_{i=1}^r i\alpha_i = n}} \mathcal{O}_n{(\alpha_1,\alpha_2,\dots,\alpha_r)} x_1^{\alpha_1}x_2^{\alpha_2}\dots x_r^{\alpha_r}$$
\end{proof}

\begin{corollary}
Let $a_n^r$ denote the $n$th Fubini number restricted by $r$, the number of preference orderings of a set of size $n$ with partition size of max size $r$. Then,
$$a_n^r=F^{[r]}_{n+r-1}\left(1,\frac{1}{2!},\frac{1}{3!},\dots,\frac{1}{r!}\right)n!$$
\end{corollary}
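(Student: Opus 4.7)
The plan is to obtain this corollary as a direct specialization of \textbf{Theorem 4.6}, with essentially no new work beyond verifying a counting interpretation.

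First I would apply \textbf{Theorem 4.6} with $x_1=x_2=\cdots=x_r=1$. The left-hand side immediately becomes $n!\,F^{[r]}_{n+r-1}\!\left(1,\tfrac{1}{2!},\tfrac{1}{3!},\dots,\tfrac{1}{r!}\right)$, which is exactly $n!$ times the quantity appearing in the corollary. The right-hand side collapses to
$$\sum_{\substack{\alpha_1,\dots,\alpha_r\geq 0 \\ \sum_{i=1}^{r} i\alpha_i = n}} \mathcal{O}_n(\alpha_1,\alpha_2,\dots,\alpha_r),$$
since all of the $x_i^{\alpha_i}$ become $1$.

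Next I would interpret this remaining sum combinatorially. By definition, $\mathcal{O}_n(\alpha_1,\dots,\alpha_r)$ is the number of preference orderings of $\{1,\dots,n\}$ whose underlying set partition has exactly $\alpha_i$ blocks of size $i$ for each $i\in\{1,\dots,r\}$. The index set $\{(\alpha_1,\dots,\alpha_r):\alpha_i\geq 0,\ \sum i\alpha_i = n\}$ ranges over exactly the block-size profiles of partitions of $[n]$ whose parts all have size at most $r$. Since every such preference ordering has exactly one such profile, summing $\mathcal{O}_n(\alpha_1,\dots,\alpha_r)$ over all admissible profiles partitions the set of all preference orderings of $[n]$ with maximum block size at most $r$. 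By definition, the total count is $a_n^r$, yielding
$$n!\,F^{[r]}_{n+r-1}\!\left(1,\tfrac{1}{2!},\dots,\tfrac{1}{r!}\right)=a_n^r,$$
and dividing by $n!$ (or rearranging) gives the stated identity.

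There is no real obstacle to this argument beyond making the combinatorial identification in the previous step explicit; the heavy lifting was done in \textbf{Theorem 4.6}. The only thing one should take care of is confirming that the specialization $x_i=1$ is admissible (it is, because the identity in \textbf{Theorem 4.6} is a polynomial identity in the $x_i$, so it holds for every evaluation), and that ``maximum block size at most $r$'' is precisely the restriction captured by truncating the multivariate polynomial to $r$ variables.
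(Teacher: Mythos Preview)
Your proof is correct and follows essentially the same approach as the paper's: specialize the preceding theorem (numbered \textbf{Theorem 4.5} in the paper, not 4.6) at $x_1=\cdots=x_r=1$ and observe that the resulting sum $\sum \mathcal{O}_n(\alpha_1,\dots,\alpha_r)$ counts precisely the preference orderings of $[n]$ with all blocks of size at most $r$, which is $a_n^r$ by definition. The paper states this in a single sentence, while you have spelled out the combinatorial identification more carefully, but there is no substantive difference.
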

\begin{proof}
Setting $x_1=x_2=...x_r=1$ in \textbf{Theorem 4.5} gives the desired equality by the definition of $a_n^r$.
\end{proof}
\begin{rem}
Since $a_n=a_n^n$,
$$a_n=F^{[n]}_{2n-1}\left(1,\frac{1}{2!},\frac{1}{3!},\dots,\frac{1}{n!}\right)n!.$$
In the following section, we will derive the following identity in terms of the complete ordinary Bell polynomials:
$$a_n=\hat{B}_n\left(1,\frac{1}{2!},\frac{1}{3!},\dots,\frac{1}{n!}\right)n!$$
\end{rem}

\section{Bell Polynomials}\label{sec:bell}

In this section, the explicit sum formula and the generating function are used to draw an identity to Bell polynomials, which will be used to give a proof of a combinatorial identity. 

\subsection{Ordinary Bell Polynomials} \

Firstly, recall the definition of the ordinary Bell polynomials. 
\begin{definition}
The \textbf{Partial Ordinary Bell Polynomial} is defined as 
$$\hat{B}_{n,k} = \sum_{\substack{j_1+j_2+\dots +j_{n-k+1} = k \\ j_1+2j_2+\dots + (n-k+1)j_{n-k+1} = n}} \frac{k!}{j_1!j_2!\dots j_{n-k+1}!}x_1^{j_1}x_2^{j_2}\dots x_{n-k+1}^{j_{n-k+1}}$$
\end{definition}
\begin{definition}
The \textbf{Complete Ordinary Bell Polynomial} is defined as
$$\hat{B}_n = \sum_{k=1}^n \hat{B}_{n,k}$$
\end{definition}

Note that the coefficients in the definition of $\hat{B}_{n,k}$ is almost identical to that of $F_n^{[r]}$ in \textbf{Theorem 3.4}. The next theorem proves how they are related. 
\begin{theorem}
$$\hat{B}_n = F_{2n-1}^{[n]}(x_1,\dots,x_n)$$
\end{theorem}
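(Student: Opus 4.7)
The plan is to prove the identity by directly matching monomials on both sides using Theorem 3.4. Applying that theorem with $r = n$ and subscript $2n - 1$ (so the weight is $(2n-1) - n + 1 = n$), the right-hand side becomes
$$F_{2n-1}^{[n]}(x_1, \ldots, x_n) = \sum_{\substack{\alpha_1, \ldots, \alpha_n \geq 0 \\ \alpha_1 + 2\alpha_2 + \cdots + n\alpha_n = n}} \binom{\alpha_1 + \cdots + \alpha_n}{\alpha_1, \ldots, \alpha_n} x_1^{\alpha_1} \cdots x_n^{\alpha_n}.$$
The task then reduces to showing the unfolded definition of $\hat{B}_n$ produces exactly this sum.

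My next step is to pad each index tuple $(j_1, \ldots, j_{n-k+1})$ appearing in $\hat{B}_{n,k}$ out to length $n$ by declaring $j_i = 0$ for $n - k + 1 < i \leq n$. Because $0! = 1$ and $x_i^0 = 1$, the multinomial coefficient $\frac{k!}{j_1! \cdots j_{n-k+1}!}$ is unchanged when written as $\frac{k!}{j_1! \cdots j_n!} = \binom{j_1 + \cdots + j_n}{j_1, \ldots, j_n}$, and the monomial $x_1^{j_1} \cdots x_{n-k+1}^{j_{n-k+1}}$ becomes $x_1^{j_1} \cdots x_n^{j_n}$. Both constraints $\sum j_i = k$ and $\sum i j_i = n$ persist, and $k$ can now be read off the tuple as $k = \sum_i j_i$.

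The crux is that this padding defines a bijection between $\bigsqcup_{k=1}^n \{(j_1, \ldots, j_{n-k+1}) : \sum j_i = k,\ \sum i j_i = n\}$ and the single index set $\{(j_1, \ldots, j_n) : \sum i j_i = n\}$. One direction is immediate. For the reverse, given any $(j_1, \ldots, j_n)$ with $\sum i j_i = n$ and $k := \sum j_i$, I must verify that $j_m = 0$ whenever $m > n - k + 1$; otherwise $j_m \geq 1$ would force
$$\sum_{i=1}^n i j_i \;\geq\; m \cdot j_m + \sum_{i \neq m} j_i \;=\; (m-1) j_m + k \;\geq\; (n - k + 1) + k \;=\; n + 1,$$
a contradiction. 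With this, summing $\hat{B}_{n,k}$ over $k = 1, \ldots, n$ collapses into precisely the formula for $F_{2n-1}^{[n]}$, completing the proof. The main (mild) obstacle is this arithmetic bound that justifies the bijection; everything else is bookkeeping. As a cross-check, one could alternatively combine Theorem 4.3 with the standard ordinary generating function $\sum_{n \geq 1} \hat{B}_n z^n = g(z)/(1 - g(z))$ for $g(z) = \sum_i x_i z^i$ and read off equality coefficient-by-coefficient.
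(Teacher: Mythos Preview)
Your proof is correct and follows essentially the same route as the paper: both apply Theorem 3.4 with $r=n$ to rewrite $F_{2n-1}^{[n]}$ and then collapse the double sum $\sum_{k=1}^n \hat{B}_{n,k}$ into a single sum over tuples with $\sum i j_i = n$ by padding with zeros. The paper simply asserts this collapse without comment, whereas you supply the arithmetic bound showing $j_m=0$ for $m>n-k+1$, which is a welcome justification of a step the paper leaves implicit.
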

\begin{proof}
$$\hat{B}_n = \sum_{k=1}^n \hat{B}_{n,k} = \sum_{k=1}^n\sum_{\substack{j_1+j_2+\dots +j_{n-k+1} = k \\ j_1+2j_2+\dots + (n-k+1)j_{n-k+1} = n}} \frac{k!}{j_1!j_2!\dots j_{n-k+1}!}x_1^{j_1}x_2^{j_2}\dots x_{n-k+1}^{j_{n-k+1}}$$
$$=\sum_{j_1+2j_2+\dots + nj_{n} = n} \frac{(j_1+j_2+\dots +j_n)!}{j_1!j_2!\dots j_n!}x_1^{j_1}x_2^{j_2}\dots x_n^{j_n}$$
$$=F^{[n]}_{2n-1}(x_1,\dots,x_n)$$
\end{proof}
\begin{rem} By letting $x_i=0$ for all $i>r$, 
$$\hat{B}_n(x_1,\dots,x_r,0,\dots) = F_{n+r-1}^{[r]}(x_1,\dots,x_r)$$
\end{rem}
We see then that $r$-Fibonacci polynomials are generalizations of the complete ordinary Bell polynomials. 

\subsection{Exponential Bell Polynomials} \

Recall the definition of the exponential Bell polynomial.

\begin{definition}
The \textbf{Partial Exponential Bell Polynomial} is defined as
$$B_{n,k} = \sum_{\substack{j_1+j_2+\dots +j_{n-k+1} = k \\ j_1+2j_2+\dots + (n-k+1)j_{n-k+1} = n}} \frac{n!}{j_1!j_2!\dots j_{n-k+1}!}x_1^{j_1}x_2^{j_2}\dots x_{n-k+1}^{j_{n-k+1}}$$
\end{definition}

\begin{theorem}
$$\sum_{k=1}^n k!B_{n,k}(x_1, 2x_2, 3!x_3, \dots, r!x_r, 0, \dots) = n!F_{n+r-1}^{[r]}$$
\end{theorem}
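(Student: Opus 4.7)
The plan is to expand $B_{n,k}$ under the given substitution, combine the factor $k!$ with the multinomial denominator when summing over $k$, and match the result against the combinatorial formula from Theorem 3.4.

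First, substituting $x_i \mapsto i!\, x_i$ for $1 \le i \le r$ and $x_i = 0$ for $i > r$ into the partial exponential Bell polynomial, the factorials $i!$ cancel the implicit $(i!)^{-j_i}$ factors in each summand of the standard normalization, and the vanishing of the later arguments restricts the support to tuples with $j_i = 0$ for $i > r$. This yields
\[
B_{n,k}(x_1, 2x_2, \ldots, r!\,x_r, 0, \ldots) = \sum_{\substack{j_1+\cdots+j_r = k \\ j_1+2j_2+\cdots+rj_r = n}} \frac{n!}{j_1!\cdots j_r!}\, x_1^{j_1}\cdots x_r^{j_r}.
\]

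Next, I multiply by $k!$ and sum over $k = 1, \ldots, n$. Each tuple $(j_1,\ldots,j_r)$ with $\sum_i i\,j_i = n$ uniquely determines $k = j_1+\cdots+j_r \in \{1,\ldots,n\}$, so the double sum collapses into a single sum, and the factor $k!/\prod_i j_i!$ becomes the multinomial coefficient $\binom{j_1+\cdots+j_r}{j_1,\ldots,j_r}$. Therefore
\[
\sum_{k=1}^n k!\, B_{n,k}(x_1, 2x_2, \ldots, r!\,x_r, 0, \ldots) = n! \sum_{\substack{j_1,\ldots,j_r \ge 0 \\ j_1+2j_2+\cdots+rj_r = n}} \binom{j_1+\cdots+j_r}{j_1,\ldots,j_r}\, x_1^{j_1}\cdots x_r^{j_r}.
\]
The right-hand sum is precisely the combinatorial expansion for $F_{n+r-1}^{[r]}$ supplied by Theorem 3.4 (the weighted-sum constraint $\sum_i i\,j_i = n$ matches the $n-r+1$ in that theorem once the index is shifted to $n+r-1$), which identifies the whole expression as $n!\, F_{n+r-1}^{[r]}$ and finishes the proof.

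The main obstacle is bookkeeping: verifying that the $i!$ factors introduced by the substitution cancel cleanly against the denominators built into the partial exponential Bell polynomial, and checking that as $k$ ranges over $1, \ldots, n$ the constraint $\sum_i j_i = k$ together with $\sum_i i\,j_i = n$ exhausts exactly the index set appearing in Theorem 3.4, without omission or double-counting. Once that is confirmed, the identification with Theorem 3.4 is automatic.
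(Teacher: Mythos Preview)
Your proof is correct but takes a genuinely different route from the paper. The paper argues analytically: it differentiates the generating function of Theorem~4.1 $n$ times, applies Fa\`a di Bruno's formula with $f(z)=1/z$ and $g(z)=1-\sum_{i=1}^r x_i z^i$, and then evaluates at $z=0$ to read off the identity. You instead work directly at the level of coefficients: expand $B_{n,k}$ under the substitution, observe that the $(i!)^{j_i}$ denominators cancel, collapse the sum over $k$ into a single sum over tuples with $\sum i j_i = n$, and match against the multinomial expansion of $F_{n+r-1}^{[r]}$ from Theorem~3.4. Your argument is more elementary---no calculus, no Fa\`a di Bruno---and makes the identity transparent as pure bookkeeping; the paper's route, by contrast, explains \emph{why} such an identity exists, as a shadow of the generating-function structure, and would generalize more readily to other $f$.

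One remark on normalization: you correctly use the standard exponential Bell polynomial with $(i!)^{j_i}$ in the denominator, which is what Fa\`a di Bruno and the Stirling identity $B_{n,k}(1,\dots,1)=\stirling{n}{k}$ require. The paper's Definition~5.4 as written omits these factors, but this is evidently a typo, since the paper's own proofs of Theorem~5.5 and Corollary~5.6 both rely on the standard convention. Your cancellation step is valid under the intended definition.
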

\begin{proof}
By taking the $n$th derivative of the generating function for $r$-Fibonacci polynomials, we find that 
$$\sum_{i=0}^{\infty}\frac{(n+i)!}{i!}F_{i+n+r-1}z^i = \frac{d^n}{dz^n} \left(\frac{1}{g(z)} \right)$$
where $g(z) = 1-\sum_{i=0}^r x_iz^i$. Recall that Faà di Bruno's formula (see \cite{faa_formula}) tells us
$$\frac{d^n}{dz^n}f(g(z)) = \sum_{k=1}^n f^{(k)}(g(z)) B_{n,k}(g'(z), g''(z), \dots, g^{(n-k+1)}(z))$$
If we set $f(z) = \frac{1}{z}$, then we find
$$\sum_{i=0}^{\infty}\frac{(n+i)!}{i!}F_{i+n+r-1}z^i = \sum_{k=1}^n\frac{(-1)^k k!}{g(x)^k} B_{n,k}(g'(z), \dots, g^{(n-k+1)}(z))$$
If $z=0$, 
$$\sum_{k=1}^n (-1)^k k!B_{n,k}(-x_1, -2x_2, -3!x_3, \dots, -r!x_r, 0, \dots)$$
$$=\sum_{k=1}^n k!B_{n,k}(x_1, 2x_2, 3!x_3, \dots, r!x_r, 0, \dots) = n!F_{n+r-1}^{[r]}$$
\end{proof}

Utilizing the above identity, we demonstrate a new proof of a known relation between the Fubini numbers and the Stirling numbers of the second kind. 

\begin{corollary}
Let $a_n$ denote the Fubini numbers. Then, 
$$\sum_{k=1}^n k!\stirling{n}{k} = a_n$$
\end{corollary}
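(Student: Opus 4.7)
The plan is to specialize Theorem 5.5 by taking $r = n$ and $x_i = 1/i!$ for $1 \le i \le n$. Under this choice, each scaled argument $i! \cdot x_i$ of the exponential Bell polynomial collapses to $1$, so the left-hand side of Theorem 5.5 becomes $\sum_{k=1}^n k! \, B_{n,k}(1, 1, \ldots, 1)$.

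Next I would invoke the classical evaluation $B_{n,k}(1, 1, \ldots, 1) = \stirling{n}{k}$. This is standard: it follows, for instance, from the exponential generating function identity
$$\sum_{n \ge k} B_{n,k}(1, 1, \ldots, 1) \frac{z^n}{n!} = \frac{(e^z - 1)^k}{k!},$$
which coincides with the exponential generating function of the Stirling numbers of the second kind. Alternatively, it is immediate from the combinatorial interpretation of $B_{n,k}$ as a weighted enumeration of set partitions of $[n]$ into $k$ nonempty blocks, with each block of size $i$ contributing weight $x_i$. With this evaluation, the left-hand side simplifies to $\sum_{k=1}^n k! \, \stirling{n}{k}$.

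For the right-hand side under the same specialization I get $n! \, F_{2n-1}^{[n]}(1, 1/2!, 1/3!, \ldots, 1/n!)$, which by the Remark following Corollary 4.6 is exactly the Fubini number $a_n$. Equating both sides yields the desired identity
$$\sum_{k=1}^n k! \, \stirling{n}{k} = a_n.$$
There is essentially no obstacle: the substantive work has already been done in proving Theorem 5.5 (via Fa\`a di Bruno's formula) and Corollary 4.6 (via the generating function for $r$-Fibonacci polynomials), and the present argument amounts to a single substitution together with the classical Bell-to-Stirling evaluation. The only point requiring a moment of care is that the ``tail of zeros'' in the argument list of $B_{n,k}$ is harmless, since $B_{n,k}$ has only $n-k+1 \le n$ relevant arguments, all of which become $1$ under our substitution.
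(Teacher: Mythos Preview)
Your proposal is correct and follows essentially the same route as the paper: specialize Theorem~5.5 at $r=n$, $x_i=1/i!$ so that the Bell-polynomial arguments all become $1$, invoke $B_{n,k}(1,\dots,1)=\stirling{n}{k}$, and identify the right-hand side $n!\,F_{2n-1}^{[n]}(1,1/2!,\dots,1/n!)$ with $a_n$ via the Fubini-number result from Section~4. The only cosmetic difference is that the paper cites the Fubini identity as ``Theorem~4.4'' whereas you point to the remark after Corollary~4.6; the content is the same.
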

\begin{proof}
\textbf{Theorem 4.4} gives the following equality
$$n!F_{2n-1}^{[n]}\left(\frac{1}{1!}, \frac{1}{2!}, \frac{1}{3!}, \dots, \frac{1}{n!}\right) = a_n$$

Additionally, we have the following identity between the partial exponential Bell polynomials and Stirling numbers (see \cite{AdvancedComb}):

$$B_{n,k}(1,1,...1)=\stirling{n}{k} $$

By \textbf{Theorem 5.5}, we find 
$$a_n = n!F_{2n-1}^{[n]}\left(\frac{1}{1!}, \frac{1}{2!}, \frac{1}{3!}, \dots, \frac{1}{n!}\right) = \sum_{k=1}^n k!B_{n,k}(1, 1, 1, \dots) = \sum_{k=1}^n k!\stirling{n}{k}$$
\end{proof}
This corollary shows how the $r$-Fibonacci polynomials can be used as a tool to prove combinatorial identities in new manner.

\medskip

\end{document}